\documentclass[a4paper]{amsart}
\usepackage{amsmath}
\usepackage{amssymb}
\usepackage{amsfonts}
\usepackage{pxfonts}
\usepackage[OT2,T1]{fontenc}

\usepackage{accents}

\usepackage[
textwidth=14.5cm, 
textheight=21cm,
hmarginratio=1:1,
vmarginratio=1:1]{geometry}

\linespread{1.05}

% THEOREM Environments (Examples)-------------------------------------
\newtheorem{thm}{Theorem}[section]

\theoremstyle{definition}

\theoremstyle{remark}
\newtheorem{rem}[thm]{Remark}

\numberwithin{equation}{section}
\numberwithin{figure}{section}

% NEW COMMANDS--------------------------------------------------------

\newcommand{\diff}{\mathrm{d}}
\newcommand{\C}{{\mathbb C}}
\newcommand{\R}{{\mathbb R}}

\newcommand{\imag}{\mathrm{i}}
\newcommand{\e}{\mathrm{e}}
\newcommand{\im}{\mathrm{Im}}
\newcommand\vol{\mathrm{vol}}

\newcommand{\Bop}{{\mathbf B}}
\newcommand{\Mop}{{\mathbf M}}

\newcommand{\setS}{\mathcal{S}}

\newcommand\D{\mathbb{D}}

\begin{document}

%---------------------------------------------------------------------
%Insert here the title, affiliations and abstract:
%
\title{Heisenberg's uncertainty principle in the sense of Beurling}

\author{Haakan~Hedenmalm}
\address{Hedenmalm: Department of Mathematics\\
KTH Royal Institute of Technology\\
S--10044 Stockholm\\
Sweden}

\email{haakanh@math.kth.se}

%\author{Pekka~J.~Nieminen}

%\address{Nieminen: Department of Mathematics and Statistics\\
%University of Helsinki\\
%Box 68\\
%FI--00014 Helsinki\\
%Finland}

%\email{pjniemin@cc.helsinki.fi}

\date{20 March 2012}

\subjclass[2010]{}

\thanks{The author was supported by the G\"oran Gustafsson
Foundation (KVA) and by Vetenskapsr\r{a}det (VR).}

\begin{abstract} 
We shed new light on Heisenberg's uncertainty principle in the sense of 
Beurling, by offering an essentially different proof which allows us to 
weaken the assumptions substantially. The new formulation is essentially
optimal, as can be seen from examples. 
%We recall that Beurling's result asserts that under minimal assumptions on
%the function $f$,
%\[
%\int_\R|f(x)\hat f(y)|\,\e^{2\pi|xy|}\diff x
%\diff y<+\infty\quad\Longrightarrow\quad f=0.
%\]
%Here, 
%\[
%\hat f(y)=\int_\R \e^{-2\pi \imag yt}f(t)\,\diff t,
%\]
%denotes the usual Fourier transform. 
The proof involves Fourier and Mellin transforms. We also introduce a version
which applies to two given functions. We also show how our method applies in
the higher dimensional setting.
\end{abstract}

\maketitle

\section{Introduction} 
\label{intro}

We will write 
\[
\hat f(y):=\lim_{T\to+\infty}\int_{-T}^{T}\e^{-2\pi\imag yt}f(t)\,\diff t,
\qquad y\in\R,
\]
for the Fourier transform of the function $f$, wherever the limit converges.
For $f\in L^1(\R)$ the integral converges absolutely, and $\hat f$ is
continuous on $\R$ with limit $0$ at infinity (the Riemann-Lebesgue lemma); 
writing ${\mathrm C}_0(\R)$ for the Banach space of all such functions, 
we are merely saying that $\hat f\in {\mathrm C}_0(\R)$ whenever 
$f\in L^1(\R)$. By extending work of Hardy \cite{Hardy}, Beurling 
(see \cite{Beu}, p. 372, and H\"ormander \cite{H}) obtained
a version of Heisenberg's uniqueness principle which is attractive for its 
simplicity and beauty. The assertion is the following. If $f\in L^1(\R)$
and 
\begin{equation}
\int_\R\int_\R|f(x)\hat f(y)|\,\e^{2\pi|xy|}\diff x\diff y<+\infty,
\label{eq-beuass1}
\end{equation}
then $f=0$ a.e. on $\R$. Trivially, $1\le\e^{2\pi|xy|}$, so that if 
$f\in L^1(\R)$ with \eqref{eq-beuass1}, then we must also have that
\[
\|f\|_{L^1(\R)}\|\hat f\|_{L^1(\R)}=\int_\R\int_\R|f(x)\hat f(y)|\,\diff x
\diff y<+\infty.
\]
We see that the assumption \eqref{eq-beuass1} presupposes that  
$f$ and $\hat f$ are both in $L^1(\R)$.  As a result, $f$ is in the space
$L^1(\R)\cap \mathrm{C}_0(\R)$, which is contained in $L^p(\R)$ for all $p$
with $1\le p\le+\infty$. 

Our analysis of Beurling's theorem is based on the observation that under 
\eqref{eq-beuass1}, the function
\begin{equation}
F(\lambda):= 
\int_\R\int_\R \bar f(x)\hat f(y)\,\e^{2\pi\imag \lambda xy}\diff x\diff y
\label{eq-functF}
\end{equation}
defines a bounded holomorphic function in the strip
\[
\setS:=\{z\in\C:\,|\im\, z|<1\}
\]
which extends continuously to the closed strip $\bar{\setS}$. 
Indeed, the complex
exponentials $\e^{2\pi\imag\lambda xy}$ are holomorphic in $\lambda$, and
we have
\[
|F(\lambda)|\le 
\int_\R\int_\R |f(x)\hat f(y)|\,\e^{-2\pi xy\,\im\, \lambda}\diff x\diff y
\le \int_\R\int_\R|f(x)\hat f(y)|\,\e^{2\pi|xy|}\diff x\diff y,\qquad
\lambda\in\bar\setS,
\]
from which the claim is immediate, by, e.g., uniform convergence.
Next, in view of the Fourier inversion theorem, 
\[
\int_\R\hat f(y)\,\e^{2\pi\imag\lambda xy}\diff y=f(\lambda x),\qquad 
x,\lambda\in\R,
\]
so the function $F(\lambda)$ given by \eqref{eq-functF} may be expressed in
the form
\begin{equation}
F(\lambda)= \int_\R \bar f(x)f(\lambda x)\,\diff x,\qquad \lambda\in\R.
\label{eq-functF2}
\end{equation}
It is easy to see that $F(\lambda)$ is continuous on $\R^\times$ since
$f\in L^2(\R)$. Here, $\R^\times$ is shorthand for $\R\setminus\{0\}$.
Moreover, let $\D:=\{z\in\C:\,|z|<1\}$ denote the open unit disk in the 
complex plane $\C$, and let $\bar\D$ denote its closure (the closed unit disk).
We let $\diff A$ denote the area element in $\C$.
%$\T=\partial\D$ is the unit circle. 
%Also, let $\R^\times$ be shorthand for $\R\setminus\{0\}$, and let 
%$\diff A$ denote the area element in $\C$.

\begin{thm}
Suppose $f\in L^2(\R)$, and let $F(\lambda)$ be given by 
\eqref{eq-functF2} for $\lambda\in\R^\times$. 
%Let $\setE$ be a closed subset of logarithmic capacity $0$, which contains 
%the points $\pm\imag$. 
Suppose that $F(\lambda)$ has a holomorphic extension to a neighborhood of 
$\bar\D\setminus\{\pm\imag\}$, such that
\[
%\liminf_{\D\ni\lambda\to\pm\imag}|\lambda^2+1|^{1/2}|F(\lambda)|=0,
\int_\D|F(\lambda)|^2|\lambda^2+1|\,\diff A(\lambda)<+\infty.
\]
Then 

{\rm (a)} 
$F(\lambda)\equiv c_0(1+\lambda^2)^{-1/2}$ for some constant $c_0\ge0$, and

{\rm(b)} if, in addition, we have $c_0=0$, then $F(\lambda)\equiv0$, and 
consequently $f=0$ a.e. 
\label{thm-1}
\end{thm}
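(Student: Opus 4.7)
The plan is to reduce the claim to a Liouville-type statement on the Riemann sphere, by introducing an auxiliary function that is tamer than $F$ across the two singular points.

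A direct substitution $y=\lambda x$ in \eqref{eq-functF2} gives, for $\lambda\in\R^{+}$, the identity $\lambda F(\lambda)=\overline{F(1/\lambda)}$, which in holomorphic form reads $\lambda F(\lambda)=F^{*}(1/\lambda)$ with $F^{*}(\lambda):=\overline{F(\bar\lambda)}$. I introduce
\[
G(\lambda):=(1+\lambda^{2})^{1/2}F(\lambda),
\]
taking the square-root branch positive on $\R^{+}$. Since $\D$ is simply connected, $G$ is single-valued and holomorphic on $\D$, and the integrability hypothesis reads precisely $G\in A^{2}(\D)$, the Bergman space on the disk. Using $\sqrt{1+1/\lambda^{2}}=\sqrt{1+\lambda^{2}}/\lambda$ on $\R^{+}$ and the corresponding $=-\sqrt{1+\lambda^{2}}/\lambda$ on $\R^{-}$, the (sign-flipping) functional equation for $F$ converts into the single Schwarz reflection identity
\[
G(1/\lambda)=\overline{G(\bar\lambda)}
\]
with respect to $\partial\D$, valid on $\R^{\times}$ and extending by analyticity.

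I next extend $G$ across the unit circle by setting $G(\mu):=\overline{G(1/\bar\mu)}$ for $|\mu|>1$. This formula is single-valued in $\mu$; the reflection identity forces $G$ to be real on $\partial\D\setminus\{\pm\imag\}$, so the two pieces agree there and the Schwarz reflection principle then provides holomorphicity across $\partial\D\setminus\{\pm\imag\}$. The result is a single-valued holomorphic function $G$ on $\hat\C\setminus\{\pm\imag\}$, with $G(\infty)=\overline{G(0)}$ finite.

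The decisive step is to remove the isolated singularities of $G$ at $\pm\imag$. On the inside, $\int_{\D\cap D(\imag,r)}|G|^{2}\,\dA$ is bounded by $\|G\|_{A^{2}(\D)}^{2}$; the reflection, whose Jacobian $|\diff(1/\bar\mu)/\diff\mu|^{2}=1/|\mu|^{4}$ is bounded on a neighborhood of $\partial\D$, transfers this $L^{2}$-bound to the exterior half, giving $G\in L^{2}\bigl(D(\imag,r)\setminus\{\imag\}\bigr)$. The standard Laurent expansion $G(\lambda)=\sum_{n\in\mathbb{Z}}a_{n}(\lambda-\imag)^{n}$ then forces every negative coefficient to vanish, because each $n\le-1$ contributes $2\pi|a_{n}|^{2}\int_{0}^{r}s^{2n+1}\,\diff s=+\infty$ to the $L^{2}$-norm. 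Thus $\imag$ is removable, and $-\imag$ is removable by the same argument. Consequently $G$ is a bounded entire function on the Riemann sphere, and Liouville's theorem yields $G\equiv c_{0}$ for some $c_{0}\in\C$. Then $F(\lambda)=c_{0}(1+\lambda^{2})^{-1/2}$; evaluating at $\lambda=1$ gives $c_{0}=\sqrt{2}\,F(1)=\sqrt{2}\,\|f\|_{L^{2}(\R)}^{2}\ge0$, proving~(a). Part~(b) is immediate, since $c_{0}=0$ forces $F\equiv0$, whence $F(1)=\|f\|_{L^{2}(\R)}^{2}=0$ and $f=0$ a.e.

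The main obstacle I anticipate is the careful verification that the $L^{2}$-bound from $\D$ genuinely transfers across $\partial\D$ via the reflection (the Jacobian becomes unbounded only at $\mu=0$, which is harmless near $\pm\imag$), together with the branch bookkeeping needed to see that the sign-flip in the functional equation on $\R^{-}$ is cleanly absorbed into $G$'s reflection identity. Once these points are settled, the Laurent expansion and Liouville steps are routine.
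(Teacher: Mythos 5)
Your proof is correct and follows essentially the same route as the paper: multiply $F$ by the branch of $(1+\lambda^2)^{1/2}$, use the inversion symmetry $F(\lambda)=|\lambda|^{-1}\bar F(1/\lambda)$ to continue the product holomorphically to $\C\setminus\{\pm\imag\}$ with boundedness at infinity, remove the singularities at $\pm\imag$ via local square area-integrability transferred across the circle, and conclude by Liouville. The only cosmetic difference is that you justify removability by the Laurent-coefficient computation, whereas the paper invokes removability for sets of logarithmic capacity zero; both are standard.
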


In comparison with Beurling's result, we assume analytic continuation of 
$F(\lambda)$ to a much smaller set, and the a priori assumption that 
$f\in L^2(\R)$ is weaker. Also, in Beurling's setting, the weighted square 
integrability of $F(\lambda)$ is trivially fulfilled because the function 
$F(\lambda)$ is then bounded on the strip $\setS$, which also shows that the 
infimum is $0$ under the heading (b) above. 
To see what case (a) of Theorem \ref{thm-1} means for the function $f$, 
we introduce the Mellin transform $\Mop_0$ as follows:
\begin{equation}
\Mop_0[f](\tau):=\int_{\R^\times}|x|^{-\frac12+\imag\tau}f(x)\,\diff x,\qquad
\tau\in\R.
\label{eq-Mellin0}
\end{equation}
%and 
%\[
%\Mop_1[f](\tau):=\int_{\R^\times}|x|^{-\frac12+\imag\tau}\mathrm{sgn}(x)\,
%f(x)\,\diff x,
%\]
%where $\mathrm{sgn}(x)=x/|x|$. 
%The $L^2$ theory for the Mellin transform is
%analogous to that of the Fourier transform (the Mellin transform is associated
%with the multiplicative structure, while the Fourier transform is related
%with the additive structure). We remark that the multiplicative group 
%$\R^\times$ is  isomorphic to the additive group $\R\times\mathbb{Z}_2$, 
%where $\mathbb{Z}_2=\mathbb{Z}/2\mathbb{Z}$.

\begin{thm}
Suppose $f\in L^2(\R)$, and let $F(\lambda)$ be given by 
\eqref{eq-functF2} for $\lambda\in\R^\times$. 
Then $F(\lambda)\equiv c_0(1+\lambda^2)^{-1/2}$ holds for some constant 
$c_0\ge0$ if and only if $f$ is even (i.e., $f(-x)=f(x)$ holds a.e.), and
\[
|\Mop_0[f](\tau)|=\frac{\sqrt{c_0}}{\pi^{1/4}}\,|\Gamma(\tfrac14+
\tfrac{\imag}{2}\tau)|,\qquad \tau\in\R.
\]
\label{thm-2}
\end{thm}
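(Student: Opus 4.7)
The plan is to reduce the equivalence to a Wiener--Khintchine identity for an autocorrelation in $L^2(\R)$ via a logarithmic change of variables, and then to match the prescribed form $c_0(1+\lambda^2)^{-1/2}$ against a classical beta integral.

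First, I would dispose of the evenness statement directly. Writing $f = f_e + f_o$ in even and odd parts, a parity check in the integrand of \eqref{eq-functF2} shows that the cross terms $\int_\R \overline{f_e(x)}f_o(\lambda x)\,\diff x$ and $\int_\R \overline{f_o(x)}f_e(\lambda x)\,\diff x$ are odd in $x$ and hence vanish, yielding $F(\lambda) = F_e(\lambda) + F_o(\lambda)$, where $F_e$ is even and $F_o$ is odd in $\lambda$. Since $c_0(1+\lambda^2)^{-1/2}$ is even, the hypothesis forces $F_o \equiv 0$, and evaluating $F_o(1) = \|f_o\|_{L^2(\R)}^2 = 0$ gives $f_o = 0$ a.e.\ Conversely, evenness of $f$ kills $F_o$ automatically, so $F$ is even of its own accord.

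Assuming $f$ is even, set $\phi := f|_{(0,\infty)} \in L^2(0,\infty)$, so that $\Mop_0[f](\tau) = 2\int_0^\infty \phi(x)\,x^{-1/2+\imag\tau}\,\diff x$, and for $\lambda > 0$,
\[
F(\lambda) = 2\int_0^\infty \overline{\phi(x)}\,\phi(\lambda x)\,\diff x.
\]
The substitutions $\lambda = \e^t$, $x = \e^u$, together with the isometry $\tilde\phi(u):=\e^{u/2}\phi(\e^u)$ from $L^2((0,\infty),\diff x)$ onto $L^2(\R,\diff u)$, convert this into the autocorrelation identity
\[
\tfrac12\,\e^{t/2} F(\e^t) = \int_\R \overline{\tilde\phi(u)}\,\tilde\phi(t+u)\,\diff u, \qquad t \in \R.
\]
By Wiener--Khintchine the Fourier transform of the right-hand side equals $|\hat{\tilde\phi}(\xi)|^2$, and a direct change of variables gives $\hat{\tilde\phi}(\xi) = \tfrac12\,\Mop_0[f](-2\pi\xi)$. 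Thus the theorem becomes the Fourier identity
\[
\bigabs{\Mop_0[f](-2\pi\xi)}^2 = 4\cdot\mathcal{F}\bigl[\tfrac12\,\e^{t/2}F(\e^t)\bigr](\xi).
\]

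Inserting the prescribed $F$ and using $\e^{t/2}(1+\e^{2t})^{-1/2} = (2\cosh t)^{-1/2}$ reduces the left-hand side to $\frac{c_0}{2\sqrt 2}(\cosh t)^{-1/2}$, whose Fourier transform is a standard beta integral yielding $\tfrac{1}{\sqrt{2\pi}}\bigabs{\Gamma(\tfrac14 + \pi\imag\xi)}^2$. Combining and rescaling $\xi = -\tau/(2\pi)$ produces $\bigabs{\Mop_0[f](\tau)}^2 = \frac{c_0}{\sqrt{\pi}}\bigabs{\Gamma(\tfrac14 + \tfrac{\imag}{2}\tau)}^2$, as claimed. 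The converse follows by reading the chain backwards: the assumed $|\Mop_0[f]|$ determines $|\hat{\tilde\phi}|^2$, whose Fourier inverse is the autocorrelation, hence $F$ on $(0,\infty)$; evenness of $f$ then extends this to $\R^\times$.

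The main obstacle is making these manipulations rigorous for merely $L^2$ data: $\Mop_0[f]$ must be interpreted through the Mellin--Plancherel isometry rather than as an absolutely convergent integral, and the Wiener--Khintchine identity for $\tilde\phi \in L^2(\R)$ should be invoked at the level of tempered distributions. The Fourier inversion step in the converse direction is legitimate because $|\Gamma(\tfrac14 + \tfrac{\imag}{2}\tau)|^2$ is integrable on $\R$ (by Stirling it decays like $|\tau|^{-1/2}\e^{-\pi|\tau|/2}$), which also ensures that the right-hand side of the claimed identity is a bona fide element of $L^1(\R)$.
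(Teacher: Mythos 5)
Your argument is correct, and at its core it is the paper's own proof with the Mellin--Plancherel machinery unpacked: the substitution $x=\e^u$, $\lambda=\e^t$ followed by Wiener--Khintchine is exactly the statement that the Mellin transform carries the dilation correlation \eqref{eq-functF2} to $|\Mop_0[f]|^2$, and your $(\cosh t)^{-1/2}$ Fourier integral is the same beta integral the paper evaluates as $\Mop_0[(1+\lambda^2)^{-1/2}](\tau)=\tfrac{1}{\sqrt{\pi}}|\Gamma(\tfrac14+\tfrac{\imag}{2}\tau)|^2$; I have checked your constants and they come out right. The one place you genuinely diverge is the evenness claim: the paper introduces the odd-character Mellin transform $\Mop_1$ and deduces $\Mop_1[f]\equiv0$ from $\Mop_1[F]\equiv0$, whereas you split $f=f_e+f_o$, observe that the cross terms in \eqref{eq-functF2} integrate to zero so that $F$ decomposes into a part even in $\lambda$ (from $f_e$) plus a part odd in $\lambda$ (from $f_o$), and then read off $\|f_o\|_{L^2(\R)}^2$ as the value at $\lambda=1$ of the odd part, which must vanish. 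This is more elementary than the paper's route and avoids the $L^2$ theory of $\Mop_1$ altogether. Two small remarks. First, the Wiener--Khintchine step needs no distribution theory: for $\tilde\phi\in L^2(\R)$ one has $\int_\R\overline{\tilde\phi(u)}\,\tilde\phi(t+u)\,\diff u=\int_\R \e^{2\pi\imag\xi t}|\hat{\tilde\phi}(\xi)|^2\diff\xi$ directly by polarized Plancherel, and $|\hat{\tilde\phi}|^2\in L^1(\R)$ automatically, so both directions of your chain are classical $L^1$--$L^2$ Fourier analysis. Second, since $\Mop_0[f]$ is defined for $f\in L^2(\R)$ only as an element of $L^2(\diff\tau)$, the displayed modulus identity should be read almost everywhere in $\tau$ (the same caveat applies to the paper's formulation).
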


\begin{rem}
To better appreciate how much weaker the assumptions of Theorem \ref{thm-1}
are compared with those of Beurling's result, we may consider the assertion (a)
of Theorem \ref{thm-1} (a) combined with Theorem \ref{thm-2}.
The Mellin transform $\Mop_0[f]$ is then determined only to its modulus, which 
leads to the existence of a multitude of functions $f$ which solve 
\eqref{eq-functF2} for the given $F(\lambda)=c_0(1+\lambda^2)^{-1/2}$; e.g., 
the linear span of all such $f$ is infinite-dimensional. 
This contrasts with the analogues of Beurling's theorem where the constant 
(or polynomial) multiples of a Gaussian $\e^{-\alpha x^2}$ (with $\alpha>0$)
are the only solutions \cite{BDJ}.
\end{rem}

\begin{rem}
It is of interest to analyze the sharpness of the above results (Theorems 
\ref{thm-1} and \ref{thm-2}). Let us look at the example  
\[
f(x)=\e^{-\pi\beta x^2},
\]
where $\mathrm{Re}\,\beta>0$. Then $f\in L^2(\R)$, and the associated 
function $F(\lambda)$ is
\[
F(\lambda)=\int_\R \bar f(x)f(\lambda x)\diff x=\bar\beta^{-1/2}\bigg(
1+\frac{\beta}{\bar\beta}\lambda^2\bigg)^{-1/2}.
\]
This function $F(\lambda)$ is holomorphic in $\D$ but it possesses two square 
root branch points at the roots of $\lambda^2=-\bar\beta/\beta$.  These
roots lie on the unit circle $\mathbb{T}=\{z\in\C:\,|z|=1\}$. This means that
permitting just two such square root branch points along $\mathbb{T}$ in the 
formulation of Theorem \ref{thm-1} already falsifies the assertion of the 
theorem.   
In particular, if, in the formulation of Theorem \ref{thm-1}, the unit 
disk $\D$ is replaced by a proper open convex subset, then the 
conclusion of the theorem is no longer valid. 
\end{rem}

\section{A family of bilinear forms}

Let us consider the bilinear forms
\begin{equation}
\Bop[f,g](\lambda):=\int_\R f(t)\,g(\lambda t)\,\diff t,\qquad \lambda\in
\R^\times,
\label{eq-bilin0}
\end{equation}
for $f,g\in L^2(\R)$. The function $\Bop[f,g]$ is then continuous on 
$\R^\times$. It has the symmetry property
\begin{equation}
\Bop[f,g](\lambda)=\frac{1}{|\lambda|}\Bop[g,f]\bigg(\frac{1}{\lambda}\bigg),
\qquad \lambda\in\R^\times,
\label{bilin-symm}
\end{equation}
as we see by an elementary change of variables.
It also enjoys the complex conjugation symmetry
\begin{equation}
\overline{\Bop[f,g]}(\lambda)=\Bop[\bar f,\bar g](\lambda),
\qquad \lambda\in\R^\times.
\label{bilin-symm1.1}
\end{equation}
It is well-known that the multiplicative convolution
\[
f_1\circledast f_2(x):=\int_{\R^\times}f_1(t)\,f_2\bigg(\frac{x}{t}\bigg)\,
\frac{\diff t}{|t|},
\]
understood in the sense of Lebesgue, is commutative 
(i.e., $f_1\circledast f_2=f_2\circledast f_1$). The relationship with the
above bilinear forms $\Bop[f,g](\lambda)$ is 
\[
\Bop[f,g](\lambda)=g\circledast \tilde f(\lambda)=
\frac{1}{|\lambda|}\,f\circledast 
\tilde g\bigg(\frac{1}{\lambda}\bigg),\qquad \lambda\in\R^\times, 
\]
where 
\[
\tilde f(t):=\frac{1}{|t|}\,f\bigg(\frac{1}{t}\bigg),\quad
\tilde g(t):=\frac{1}{|t|}\,g\bigg(\frac{1}{t}\bigg).
\]

\section{The proofs of the first set of theorems}
\label{sec-proof}

\begin{proof}[Proof of Theorem \ref{thm-1}]
A comparison of \eqref{eq-functF2} and \eqref{eq-bilin0} reveals that 
$F(\lambda)=\Bop[\bar f,f](\lambda)$ for $\lambda\in\R^\times$.
In view of \eqref{bilin-symm} and \eqref{bilin-symm1.1}, $F(\lambda)$ has the 
symmetry property
\begin{equation}
F(\lambda)=\frac{1}{|\lambda|}\,\bar F\bigg(\frac{1}{\lambda}\bigg),\qquad
\lambda\in\R^\times.
\label{eq-symm1}
\end{equation}
Let $J(\lambda)$ be the function
\[
J(\lambda):=\sqrt{1+\lambda^2},
\]
which defines a single-valued holomorphic function in the slit complex plane
$\C\setminus\imag(\R\setminus]\!-\!1,1[)$ with value $1$ at $\lambda=0$. 
Next, we consider the function
$\Phi:=FJ$, which is a well-defined and continuous along $\R$, while it 
defines a holomorphic function in (a neighborhood of) 
$\bar\D\setminus\{\pm\imag\}$. Along the real line, we have, in view of 
\eqref{eq-symm1}, 
\begin{multline}
\Phi(\lambda)=F(\lambda)J(\lambda)=\frac{1}{|\lambda|}\,J(\lambda)\bar F\bigg(
\frac{1}{\lambda}\bigg)=\frac{\sqrt{1+\lambda^2}}{|\lambda|}\,\bar F\bigg(
\frac{1}{\lambda}\bigg)\\
=\sqrt{1+\frac{1}{\lambda^2}}\,\bar F\bigg(
\frac{1}{\lambda}\bigg)=\bar\Phi\bigg(\frac{1}{\lambda}\bigg)=
\bar\Phi\bigg(\frac{1}{\bar\lambda}\bigg),\qquad\lambda\in
\R^\times.
\label{eq-symm2}
\end{multline}
As a consequence, $\Phi$ is real-analytic on $\R$, and has two holomorphic
extensions, one to (a neighborhood of) $\bar\D\setminus\{\pm\imag\}$, and the 
other to (a neighborhood of) $\bar\D_e
\setminus\{\pm\imag\}$; here, $\bar\D_e:=\C\setminus\D$ is the closed exterior 
disk.
These two holomorphic continuations must then coincide. So, we see that
$\Phi$ extends to a holomorphic function in $\C\setminus\{\pm\imag\}$, which
is bounded in a neighborhood of infinity, by inspection of \eqref{eq-symm2}.
The integrability assumption of the theorem says that 
\[
\int_\D|\Phi(\lambda)|^2\diff A(\lambda)<+\infty,
\]
and the symmetry property \eqref{eq-symm2} gives the corresponding 
integrability in the exterior disk $\D_e=\C\setminus\bar\D$:
\[
\int_{\D_e}|\Phi(\lambda)|^2\frac{\diff A(\lambda)}{|\lambda|^4}<+\infty.
\] 
In particular, $\Phi$ is square area-integrable in a neighborhood of 
$\{\pm\imag\}$. But then $\Phi$ extends
holomorphically across $\pm\imag$ (one explanation among many:
a two-point set has logarithmic capacity $0$, see \cite{C}). 
Now $\Phi$ is entire and bounded, so Liouville's theorem tells us that 
$\Phi$ is constant: $\Phi(\lambda)\equiv c_0$. That $c_0\ge0$ follows from
\begin{equation*}
c_0=\Phi(1)=J(1)F(1)=\sqrt{2}\int_\R\bar f(x)\,f(x)\diff x=
\sqrt{2}\int_\R|f(x)|^2\diff x\ge0.
%\label{eq-c0}
\end{equation*}
This gives us the first assertion as well as the second. 
The proof is complete.
\end{proof}

\begin{proof}[Proof of Theorem \ref{thm-2}]
We need to show that if
\[
F(\lambda)=\int_\R \bar f(x)f(\lambda x)\diff x\equiv c_0(1+\lambda^2)^{-1/2},
\]
on $\R^\times$, then the Mellin transform $\Mop_0[f]$ has the indicated form.
We need the complementary Mellin transform
\[
\Mop_1[f](\tau):=\int_{\R^\times}|x|^{-\frac12+\imag\tau}\mathrm{sgn}(x)\,
f(x)\,\diff x,
\]
as well. Here, we write $\mathrm{sgn}(x)=x/|x|$. 
The $L^2$ theory for the Mellin transforms is
analogous to that of the Fourier transform (the Mellin transform is associated
with the multiplicative structure, while the Fourier transform is related
with the additive structure). We remark that the multiplicative group 
$\R^\times$ is  isomorphic to the additive group $\R\times\mathbb{Z}_2$, 
where $\mathbb{Z}_2=\mathbb{Z}/2\mathbb{Z}$. By symmetry, we see that
$\Mop_1[F](\tau)\equiv0$, while a computation reveals that
\[
\Mop_0[F](\tau)=c_0\int_{\R^\times}|\lambda|^{-\frac12+\imag\tau}
(1+\lambda^2)^{-1/2}\diff\lambda=\frac{c_0}{\sqrt{\pi}}\,|\Gamma(\tfrac14+
\tfrac{\imag}{2}\tau)|^2,
\]
If we apply the Mellin transforms $\Mop_0,\Mop_1$ to \eqref{eq-functF2},
we find that $\Mop_1[f]\equiv0$ and that
\[
\Mop_0[F](\tau)=|\Mop_0[f](\tau)|^2=\Mop_0[F](\tau).
\]
Here, the natural way to verify the right-hand side equality is to apply
the inverse Mellin transform to the two sides. The assertion that 
$\Mop_1[f]\equiv0$ holds if and only if $f$ is an even function.
The proof is complete.
\end{proof}

\section{An extension involving two functions}

We consider two functions $f,g\in L^2\R)$, and introduce the functions
\begin{equation}
F_1(\lambda):=\int_\R \bar f(x)g(\lambda x)\diff x,\qquad
F_2(\lambda):=\int_\R \bar g(x)f(\lambda x)\diff x.
\label{eq-functF12}
\end{equation}
We quickly observe that if $f$ is even and $g$ is odd, then $F_1(\lambda)=
F_2(\lambda)=0$ on $\R^\times$. The same conclusion holds if $f$ is odd and
$g$ is even. This means that we cannot hope to claim that one of the functions
$f,g$ must vanish. But sometimes this combination of even and odd is the only
obstruction, as we shall see.

\begin{thm}
Suppose $f,g\in L^2(\R)$, and let $F_j(\lambda)$ be given by 
\eqref{eq-functF2} for $\lambda\in\R^\times$ and $j=1,2$. 
%Let $\setE$ be a closed subset of logarithmic capacity $0$, which contains 
%the points $\pm\imag$. 
Suppose that both $F_j(\lambda)$ have a holomorphic extensions to $\D$
such that
\[
\int_\D|F_j(\lambda)|^2|\lambda^2+1|\,\diff A(\lambda)<+\infty,\qquad j=1,2.
\]
Suppose, moreover, that one of the functions, say $F_1$, has a holomorphic 
extension to a neighborhood of $\bar\D\setminus\{\pm\imag\}$.
Then 

{\rm (a)} 
$F_j(\lambda)\equiv c_j(1+\lambda^2)^{-1/2}$ for $j=1,2$, for some constants 
$c_1,c_2\in\C$ with $c_2=\bar c_1$, and

{\rm(b)} if, in addition, we have $c_1=0$, then $F_1(\lambda)\equiv 
F_2(\lambda)\equiv0$. 
\label{thm-3}
\end{thm}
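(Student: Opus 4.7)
The plan is to reduce to the one-function setup of Theorem \ref{thm-1} by combining $F_1$ and $F_2$ via their mutual symmetry. Applying \eqref{bilin-symm} and \eqref{bilin-symm1.1} to $F_1=\Bop[\bar f,g]$ and $F_2=\Bop[\bar g,f]$, one obtains the cross-symmetry
\[
F_2(\lambda)=\frac{1}{|\lambda|}\,\bar F_1\!\biggl(\frac{1}{\lambda}\biggr),\qquad \lambda\in\R^\times,
\]
which replaces the self-symmetry \eqref{eq-symm1} used in the proof of Theorem \ref{thm-1}. Setting $\Phi_j:=F_jJ$ with $J(\lambda):=\sqrt{1+\lambda^2}$, the same manipulation that produced \eqref{eq-symm2} now yields
\[
\Phi_1(\lambda)=\bar\Phi_2\!\biggl(\frac{1}{\bar\lambda}\biggr),\qquad \lambda\in\R^\times.
\]

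The next step is to glue these into a single holomorphic function. By hypothesis, $\Phi_1$ extends holomorphically to a neighborhood $U$ of $\bar\D\setminus\{\pm\imag\}$, while $\Phi_2$ is holomorphic in $\D$, which makes $\bar\Phi_2(1/\bar\lambda)$ holomorphic in $\D_e$. The overlap $U\cap\D_e$ is open and each of its connected components contains an interval of real $\lambda$ with $|\lambda|$ slightly greater than $1$; on such intervals the two holomorphic functions agree by the displayed identity, so by the identity theorem they agree on the overlap. Gluing yields a single holomorphic function $\Phi$ on $U\cup\D_e=\C\setminus\{\pm\imag\}$.

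To conclude as in Theorem \ref{thm-1}, I would verify that $\Phi$ is bounded and extends across the remaining two singularities. Near $\infty$, $\Phi(\lambda)=\bar\Phi_2(1/\bar\lambda)\to\bar\Phi_2(0)$, since $\Phi_2$ is continuous at $0$. The change of variables $\lambda\mapsto1/\bar\lambda$ converts the integrability assumption $\int_\D|\Phi_2|^2\,\dA<+\infty$ into $\int_{\D_e}|\Phi|^2|\lambda|^{-4}\,\dA<+\infty$, and combined with $\int_\D|\Phi_1|^2\,\dA<+\infty$ this yields local square integrability of $\Phi$ across $\pm\imag$; the isolated singularities there are therefore removable, as $\{\pm\imag\}$ has vanishing logarithmic capacity. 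Hence $\Phi$ is a bounded entire function, so $\Phi\equiv c_1\in\C$ by Liouville, giving $F_1(\lambda)\equiv c_1(1+\lambda^2)^{-1/2}$ and, via the cross-symmetry, $F_2(\lambda)\equiv\bar c_1(1+\lambda^2)^{-1/2}$, so $c_2=\bar c_1$. Part (b) is then immediate from these explicit formulas.

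The main obstacle will be the gluing step: unlike the situation in Theorem \ref{thm-1}, there is no self-reflection producing a single function on $\R^\times$, and one must verify that $U\cap\D_e$ really is a nonempty open set whose components each meet the real axis, so that the identity theorem forces $\Phi_1$ and the reflected $\bar\Phi_2(1/\bar\lambda)$ to coincide on the overlap. The assumed holomorphic extension of $F_1$ past the unit circle $\mathbb{T}\setminus\{\pm\imag\}$ is precisely what provides this overlap, and explains why the stronger hypothesis is imposed on $F_1$ but not on $F_2$.
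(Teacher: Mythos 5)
Your proposal is correct and follows essentially the same route as the paper: the cross-symmetry $\Phi_1(\lambda)=\bar\Phi_2(1/\bar\lambda)$, gluing across the circle using the extension of $F_1$ past $\mathbb{T}\setminus\{\pm\imag\}$, removability of $\pm\imag$ via local square integrability and vanishing logarithmic capacity, and Liouville. In fact you supply more detail than the paper does at the gluing step (the identity theorem on the overlap $U\cap\D_e$), which the paper compresses into a single sentence.
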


The application of the Mellin transforms leads to the following result.

\begin{thm}
Suppose $f,g\in L^2(\R)$, and let $F_1(\lambda)$ be given by 
\eqref{eq-functF12} for $\lambda\in\R^\times$. 
Then $F_1(\lambda)\equiv c_1(1+\lambda^2)^{-1/2}$ holds for some constant 
$c_1\in\C$ if and only if 
\[
\overline{\Mop_1[f](\tau)}
\Mop_1[g](\tau)=0,\qquad \mathrm{a.e.}\,\, \tau\in\R,
\]
and
\[
\overline{\Mop_0[f](\tau)}\,\Mop_0[g](\tau)=\frac{c_1}{\sqrt{\pi}}
\,|\Gamma(\tfrac14+\tfrac{\imag}{2}\tau)|^2,\qquad \mathrm{a.e.}\,\,\tau\in\R.
\]
\label{thm-4}
\end{thm}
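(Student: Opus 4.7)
The plan is to mimic the proof of Theorem \ref{thm-2}, now with two functions in play. The target identity $F_1(\lambda)\equiv c_1(1+\lambda^2)^{-1/2}$ holds on $\R^\times$ if and only if the Mellin transforms $\Mop_0[F_1]$ and $\Mop_1[F_1]$ match those of the explicit right-hand side. Recall from the proof of Theorem \ref{thm-2} that the function $(1+\lambda^2)^{-1/2}$ is even, so $\Mop_1[(1+\lambda^2)^{-1/2}]\equiv0$, while a Beta-function computation yields
\[
\Mop_0[(1+\lambda^2)^{-1/2}](\tau)=\frac{1}{\sqrt{\pi}}\,\bigabs{\Gamma(\tfrac14+\tfrac{\imag}{2}\tau)}^2.
\]
By the $L^2$ Mellin inversion theorem (applied separately on the even and odd parts, reflecting $\R^\times\cong \R\times \mathbb{Z}_2$), the pair $(\Mop_0[F_1],\Mop_1[F_1])$ determines $F_1$, so the target identity is equivalent to
\[
\Mop_0[F_1](\tau)=\frac{c_1}{\sqrt{\pi}}\,\bigabs{\Gamma(\tfrac14+\tfrac{\imag}{2}\tau)}^2,\qquad \Mop_1[F_1](\tau)\equiv0.
\]

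Next, starting from $F_1(\lambda)=\Bop[\bar f,g](\lambda)=\int_\R \bar f(x)\,g(\lambda x)\,\diff x$, I would apply $\Mop_0$ and $\Mop_1$ and swap the order of integration. With the change of variable $u=\lambda x$ (for fixed $x\in\R^\times$), so that $\diff\lambda=\diff u/|x|$ and $\mathrm{sgn}(\lambda)=\mathrm{sgn}(u)\mathrm{sgn}(x)$, the $|\lambda|^{-1/2+\imag\tau}$ factor decomposes as $|u|^{-1/2+\imag\tau}|x|^{1/2-\imag\tau}$, and one gets
\[
\Mop_0[F_1](\tau)=\overline{\Mop_0[f](\tau)}\,\Mop_0[g](\tau),\qquad
\Mop_1[F_1](\tau)=\overline{\Mop_1[f](\tau)}\,\Mop_1[g](\tau).
\]
(The conjugate on $\Mop_j[f]$ comes from the complex conjugation on $f$ combined with the sign of the imaginary exponent.) Comparing this with the two equivalent conditions on $\Mop_j[F_1]$ above immediately gives the stated characterization in both directions.

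The main obstacle will be the justification of Fubini and of the $L^2$ Mellin calculus at the critical line $\mathrm{Re}\,s=\tfrac12$. Since $f,g\in L^2(\R)$, each of $\Mop_0[f],\Mop_1[f],\Mop_0[g],\Mop_1[g]$ lies in $L^2(\R,\diff\tau)$ by Plancherel for the multiplicative group $\R^\times$, so the products $\overline{\Mop_j[f]}\Mop_j[g]$ are in $L^1(\R,\diff\tau)$, which is exactly the right class for recovering $F_1$ via inverse Mellin transform. The Fubini step is legitimate once one observes that $|F_1|$ admits the Cauchy--Schwarz bound $|F_1(\lambda)|\le \|f\|_{L^2}\|g\|_{L^2}|\lambda|^{-1/2}$, which, combined with the multiplicative convolution identity $\Bop[\bar f,g]=g\circledast\widetilde{\bar f}$ from Section 2, reduces the identities to the standard statement that the Mellin transform converts multiplicative convolution to pointwise multiplication. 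With these technical points in place, the two implications of the theorem follow directly from the two displayed equivalences above.
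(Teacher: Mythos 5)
Your proposal is correct and follows exactly the route the paper intends: the paper's own proof of Theorem \ref{thm-4} consists of the single remark that it is ``immediate by taking the Mellin transforms, as in the proof of Theorem \ref{thm-2}'', and your computation of $\Mop_j[F_1]=\overline{\Mop_j[f]}\,\Mop_j[g]$ together with the Beta-integral evaluation of $\Mop_0[(1+\lambda^2)^{-1/2}]$ is precisely the omitted detail. Your closing observations on the $L^2$ Mellin--Plancherel calculus and the convolution-to-product identity supply the justification the paper leaves implicit, so nothing is missing.
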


The assertion of Theorem \ref{thm-4} gives a very precise answer as to what
$f,g$ can be in the setting of Theorem \ref{thm-3}. It may however at times
be difficult to see what the conditions actually say when $f,g$ are explicitly
given. So we will explain a couple of cases when we can be more precise. 
The {\em support} of a function $f\in L^2(\R)$ -- written $\mathrm{supp}\,f$ 
-- is the intersection of all closed sets $E\subset\R$ such that $f=0$ a.e. 
on $\R\setminus E$. 
Let us agree to say that a function $f\in L^2(\R)$ has {\em dilationally 
one-sided support} if (i) $\mathrm{supp}\,f$ is bounded in $\R$, or if (ii) 
$\mathrm{supp}\,f\subset\R^\times=\R\setminus\{0\}$.  

\begin{thm}
Suppose $f,g\in L^2(\R)$, and let $F_1(\lambda)$ be given by 
\eqref{eq-functF12} for $\lambda\in\R^\times$, and suppose that 
$F_1(\lambda)\equiv0$. If $f$ has dilationally one-sided support, then either:
{\rm(a)} $f$ is even and $g$ is odd, {\rm(b)} $f$ is odd and $g$ is 
even, or {\rm(c)} $f=0$ a.e. or $g=0$ a.e. 
\label{thm-4'}
\end{thm}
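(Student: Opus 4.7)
The plan is to feed the hypothesis $F_1\equiv0$ into Theorem~\ref{thm-4} (with $c_1=0$) to obtain the two a.e.\ identities
\[
\overline{\Mop_0[f](\tau)}\,\Mop_0[g](\tau)=0,\qquad
\overline{\Mop_1[f](\tau)}\,\Mop_1[g](\tau)=0,
\]
and then to exploit the dilationally one-sided support of $f$ to show that each of $\Mop_0[f]$ and $\Mop_1[f]$ is either identically zero or nonzero almost everywhere on $\R$. Writing the even/odd decompositions $f=f_e+f_o$ and $g=g_e+g_o$, the parity symmetries of the weights $|x|^{-1/2+\imag\tau}$ and $|x|^{-1/2+\imag\tau}\mathrm{sgn}(x)$ give $\Mop_0[f]=\Mop_0[f_e]$ and $\Mop_1[f]=\Mop_1[f_o]$ (and similarly for $g$), so these two Mellin transforms read off, respectively, the even and odd parts.

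The central step is to recast the Mellin setup as a Fourier setup via the substitution $x=\pm\e^u$. Setting $\phi_\pm(u):=\e^{u/2}f(\pm\e^u)$, both functions lie in $L^2(\R,\diff u)$ (since $f\in L^2(\R)$), and a routine change of variables gives
\[
\Mop_0[f](\tau)=\int_\R\bigl(\phi_+(u)+\phi_-(u)\bigr)\,\e^{\imag\tau u}\,\diff u,\qquad
\Mop_1[f](\tau)=\int_\R\bigl(\phi_+(u)-\phi_-(u)\bigr)\,\e^{\imag\tau u}\,\diff u.
\]
Now the dilationally one-sided hypothesis reads very cleanly: in case~(i), $\mathrm{supp}\,f\subset[-R,R]$ forces $\phi_\pm$ to be supported in $\{u\le\log R\}$; in case~(ii), $\mathrm{supp}\,f$ disjoint from some neighborhood $[-r,r]$ of the origin forces $\phi_\pm$ to be supported in $\{u\ge\log r\}$. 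By the $L^2$ Paley--Wiener theorem, $\Mop_0[f]$ and $\Mop_1[f]$ are thus boundary values of functions in the Hardy class $H^2$ of a common half-plane (lower in case~(i), upper in case~(ii)).

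The proof closes on the classical uniqueness principle for Hardy spaces: a nonzero $H^2$ boundary function cannot vanish on a set of positive Lebesgue measure (equivalently, $\log|h|$ is Poisson-integrable for $h\in H^2\setminus\{0\}$). Hence each of $\Mop_0[f]$ and $\Mop_1[f]$ is either identically zero or nonzero a.e., and whenever $\Mop_j[f]\not\equiv0$ the product relation forces $\Mop_j[g]\equiv0$. A four-way case analysis then finishes the argument: both transforms of $f$ vanishing gives $f=0$, a subcase of~(c); $\Mop_0[f]\not\equiv0$ with $\Mop_1[f]\equiv0$ yields $f$ even and $g$ odd, i.e.\ case (a); the symmetric scenario yields case (b); and both nonvanishing forces $g_e=g_o=0$, hence $g=0$, again a subcase of~(c). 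The main technical input is the combination of Paley--Wiener with the Hardy space uniqueness theorem; the dilationally one-sided hypothesis is engineered precisely so that this combination activates after the substitution $x=\pm\e^u$, and this translation is the step I would check most carefully.
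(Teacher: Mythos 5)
Your proposal is correct and follows essentially the same route as the paper: reduce to the product relations via Theorem~\ref{thm-4} with $c_1=0$, use the dilationally one-sided support to place $\Mop_0[f]$ and $\Mop_1[f]$ in $H^2$ of a half-plane so that each is either identically zero or nonzero a.e.\ (the paper cites Privalov's theorem for this), and conclude by the same four-way case analysis. You merely supply more detail than the paper does on the substitution $x=\pm\e^u$ and the Paley--Wiener step, which the paper asserts without proof.
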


\section{Proofs of the theorems involving two functions}

\begin{proof}[Proof of Theorem \ref{thm-3}]
A comparison of \eqref{eq-functF2} and \eqref{eq-bilin0} reveals that 
$F_1(\lambda)=\Bop[\bar f,g](\lambda)$ and $F_2(\lambda)=
\Bop[\bar g,f](\lambda)$ for $\lambda\in\R^\times$.
In view of \eqref{bilin-symm} and \eqref{bilin-symm1.1}, $F_j(\lambda)$, 
for $j=1,2$, have the symmetry property
\begin{equation}
F_1(\lambda)=\frac{1}{|\lambda|}\,\bar F_2\bigg(\frac{1}{\lambda}\bigg),\qquad
\lambda\in\R^\times.
\label{eq-symm1'}
\end{equation}
If we put $\Phi_j:=F_jJ$, where $J(\lambda)=(1+\lambda^2)^{1/2}$ as before,
then \eqref{eq-symm1'} says that
\[
\Phi_1(\lambda)=\bar\Phi_2\bigg(\frac{1}{\bar\lambda}\bigg),\qquad\lambda\in
\R^\times.
\]
The given assumptions on $F_1,F_2$ show that $\Phi_1$ has a holomorphic 
extension to $\C\setminus\{\pm\imag\}$, which is area-$L^2$ integrable locally 
around $\{\pm\imag\}$. As a consequence, the singularities at $\{\pm\imag\}$
are removable (see, e.g., \cite{C}), and Liouville's theorem tells us that
$\Phi_1$ is constant. The remaining assertions are easy consequences of this.
\end{proof}

\begin{proof}[Proof of Theorem \ref{thm-4}]
The proof is immediate by taking the Mellin transforms, as in the proof
of Theorem \ref{thm-2}. We omit the details.
\end{proof}

\begin{proof}[Proof of Theorem \ref{thm-4'}]
In view of Theorem \ref{thm-4}, we have that
\begin{equation}
\overline{\Mop_0[f](\tau)}\,\Mop_0[g](\tau)=
\overline{\Mop_1[f](\tau)}\Mop_1[g](\tau)=0,\qquad \mathrm{a.e.}\,\, \tau\in\R.
\label{eq-prodrel}
\end{equation}
The assumption that $f$ has dilationally one-sided support means in terms of
Mellin transforms that the functions $\Mop_j[f]$, $j=0,1$, both extend to a 
function in $H^2$ of either the upper or the lower half-plane. In any case,
Privalov's theorem guarantees that for a given $j\in\{0,1\}$, either 
$\Mop_j[f]=0$ a.e. on $\R$, or $\Mop_j[f]\ne0$ a.e. on $\R$. This leaves us
with four different possibilities. 

CASE 1.  $\Mop_0[f]=0$ a.e.and $\Mop_1[f]=0$ a.e.. Then $f=0$ a.e.is immediate,
so we find purselves in the setting of (c).

CASE 2. $\Mop_0[f]=0$ a.e. and $\Mop_1[f]\ne0$ a.e. Then \eqref{eq-prodrel}
gives that $\Mop_1[g]=0$ a.e., so that $f$ is odd and
$g$ is even, and we are in the setting of (b). 

CASE 3. $\Mop_0[f]\neq0$ a.e. and $\Mop_1[f]=0$ a.e. Then \eqref{eq-prodrel}
gives that $\Mop_0[g]=0$ a.e., and we conclude that $f$ is even and $g$ is odd,
and we are in the setting of (a). 

CASE 4.$\Mop_0[f]\neq0$ a.e. and $\Mop_1[f]\ne0$ a.e.  Then 
\eqref{eq-prodrel} shows that 
$\Mop_0[g]=\Mop_1[g]=0$ a.e., so that $g=0$ a.e., and we are in the setting 
of (c). 
\end{proof}

\section{A higher dimensional analogue}

We present an analogue of Theorem \ref{thm-1} for $\R^n$, $n=1,2,3,\ldots$;
$\diff\vol_n$ is volume measure in $\R^n$. 
For $f\in L^2(\R^n)$, let $F(\lambda)$ be the function
\begin{equation}
F(\lambda)= \int_{\R^n} \bar f(x)f(\lambda x)\,\diff\vol_n(x),
\qquad \lambda\in\R^\times.
\label{eq-functF22}
\end{equation}
This function arises if we write 
\[
F(\lambda)=\int_{\R^n}\int_{\R^n} \bar f(x)\hat f(y)
\e^{2\pi\lambda \imag \langle x,y\rangle}\diff\vol_n(x)\diff\vol_n(y),
\]
where 
\[
\hat f(y)=\int_{\R^n}\e^{-2\pi\imag \langle x,y\rangle}
\diff\vol_n(x)
\]
is the usual Fourier transform. Here, 
\[
\langle x,y\rangle=x_1y_1+\cdots+x_ny_n,\qquad x=(x_1,\ldots, x_n),\,\,\,
y=(y_1,\ldots, y_n),
\]
is the usual inner product in $\R^n$.
 
\begin{thm}
Suppose $f\in L^2(\R^n)$, and let $F(\lambda)$ be given by 
\eqref{eq-functF22} for $\lambda\in\R^\times$. 
Suppose that $F(\lambda)$ has a holomorphic extension to a neighborhood of 
$\bar\D\setminus\{\pm\imag\}$, such that
\[
\int_\D|F(\lambda)|^2|\lambda^2+1|^n\,\diff A(\lambda)<+\infty.
\]
Then 

{\rm (a)} 
$F(\lambda)\equiv c_0(1+\lambda^2)^{-n/2}$ for some constant $c_0\ge0$, and

{\rm(b)} if, in addition, we have $c_0=0$, then $F(\lambda)\equiv0$, and 
consequently $f=0$ a.e.
\label{thm-5}
\end{thm}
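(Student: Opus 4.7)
The plan is to reproduce the argument of Theorem \ref{thm-1} with all dimensional factors adjusted for $\R^n$. The skeleton is identical: establish a bilinear symmetry linking $F(\lambda)$ to $F(1/\lambda)$; multiply by an appropriate power of $J(\lambda)=\sqrt{1+\lambda^2}$ to absorb the resulting weight and produce a function symmetric under $\lambda\mapsto 1/\bar\lambda$; combine that symmetry with the given extension across $\partial\D\setminus\{\pm\imag\}$ to continue to the exterior; check area-$L^2$ integrability of the continuation in a punctured neighborhood of $\pm\imag$; and conclude via a removable singularity theorem and Liouville.

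Concretely, the substitution $y=\lambda x$ in \eqref{eq-functF22}, whose Jacobian is $|\lambda|^n$ in $n$ dimensions rather than $|\lambda|$, yields the symmetry
\[
F(\lambda)=\frac{1}{|\lambda|^n}\,\overline{F(1/\lambda)},\qquad\lambda\in\R^\times.
\]
Setting $\Phi(\lambda):=F(\lambda)J(\lambda)^n$, with $J$ the branch of $\sqrt{1+\lambda^2}$ on $\C\setminus\imag(\R\setminus\,]\!-\!1,1[)$ with $J(0)=1$, one has $J(\lambda)^n/|\lambda|^n=J(1/\lambda)^n$ along $\R^\times$ (both sides positive real), whence
\[
\Phi(\lambda)=\overline{\Phi(1/\bar\lambda)},\qquad\lambda\in\R^\times,
\]
in exact parallel with \eqref{eq-symm2}. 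Coupled with the given interior extension and the identity principle, this formula produces a holomorphic extension of $\Phi$ to $\C\setminus\{\pm\imag\}$, bounded near infinity because $\Phi(1/\bar\lambda)\to\Phi(0)$ there.

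The integrability hypothesis $\int_\D|F|^2|\lambda^2+1|^n\,\diff A<+\infty$ reads exactly $\int_\D|\Phi|^2\diff A<+\infty$. Pushing this through $\lambda\mapsto 1/\bar\lambda$ (area Jacobian $|\lambda|^{-4}$, harmless near the unit circle) gives the analogous bound on a punctured neighborhood of each of $\pm\imag$ on the exterior side, so $\Phi$ is locally area-$L^2$ in a full punctured neighborhood of each exceptional point. Since $\{\pm\imag\}$ has logarithmic capacity zero, both singularities are removable, and $\Phi$ extends to a bounded entire function, hence is a constant $c_0$ by Liouville's theorem. Evaluation at $\lambda=1$ gives $c_0=\Phi(1)=J(1)^nF(1)=2^{n/2}\|f\|_{L^2(\R^n)}^2\ge0$, establishing (a); and (b) is then immediate, since $c_0=0$ forces $\|f\|_{L^2(\R^n)}=0$.

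I do not expect a substantive obstacle: the entire dimensional dependence is captured by replacing $1$ by $n$ in the exponent of $J$, in the Jacobian of the dilation, and in the weight $|\lambda^2+1|^n$, and these three factors scale in lockstep precisely because the predicted answer $(1+\lambda^2)^{-n/2}$ is engineered to make $\Phi$ constant. The one technicality worth flagging is the branch of $J^n$ for odd $n$: with $J$ fixed as above, $J^n$ is single-valued on the same slit plane, and both the interior and exterior continuations use this branch along their common real trace, so no monodromy arises.
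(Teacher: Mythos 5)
Your proposal is correct and follows the paper's own proof essentially verbatim: the same symmetry $F(\lambda)=|\lambda|^{-n}\overline{F(1/\lambda)}$, the same auxiliary function $\Phi=F\cdot(1+\lambda^2)^{n/2}$, and the same chain of extension across the circle, removable singularities at $\pm\imag$ via area-$L^2$ integrability, Liouville, and evaluation at $\lambda=1$. The remark on the single-valued branch of $J^n$ for odd $n$ is a sensible clarification but does not alter the argument.
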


\section{Proof of the higher dimensional analogue}

\begin{proof}[Proof of Theorem \ref{thm-5}]
We indicate what differs from the case $n=1$, which is covered by the proof
of Theorem \ref{thm-1}. 
An exercise involving a change of variables shows that $F(\lambda)$ has the 
symmetry 
property
\begin{equation}
F(\lambda)=\frac{1}{|\lambda|^n}\,\bar F\bigg(\frac{1}{\lambda}\bigg),\qquad
\lambda\in\R^\times.
\label{eq-symm1''}
\end{equation}
Let $J_n(\lambda)$ be the function
\[
J_n(\lambda):=(1+\lambda^2)^{n/2}.
\]
Next, we consider the function $\Phi:=FJ_n$, which is a well-defined and 
continuous along $\R$, while it 
defines a holomorphic function in (a neighborhood of) 
$\bar\D\setminus\{\pm\imag\}$. Along the real line, we have, in view of 
\eqref{eq-symm1''}, 
\begin{multline}
\Phi(\lambda)=F(\lambda)J(\lambda)=\frac{1}{|\lambda|^n}\,
J(\lambda)\bar F\bigg(\frac{1}{\lambda}\bigg)=
\frac{(1+\lambda^2)^{n/2}}{|\lambda|^n}\,\bar F\bigg(
\frac{1}{\lambda}\bigg)\\
=\bigg(1+\frac{1}{\lambda^2}\bigg)^{n/2}\,\bar F\bigg(
\frac{1}{\lambda}\bigg)=\bar\Phi\bigg(\frac{1}{\lambda}\bigg)=
\bar\Phi\bigg(\frac{1}{\bar\lambda}\bigg),\qquad\lambda\in
\R^\times.
\label{eq-symm2''}
\end{multline}
As a consequence of the assumptions, $\Phi$ extends to a holomorphic function 
in $\C\setminus\{\pm\imag\}$, which is bounded in a neighborhood of infinity, 
by inspection of \eqref{eq-symm2''}.
The integrability assumption of the theorem says that $\Phi$ is area-$L^2$
integrable near $\{\pm\imag\}$, so that the singularities at $\pm\imag$ are
removable. Liouville's theorem tells us that $\Phi$ is constant: 
$\Phi(\lambda)\equiv c_0$. That $c_0\ge0$ follows from
\begin{equation*}
c_0=\Phi(1)=J_n(1)F(1)=2^{n/2}\int_\R\bar f(x)f(x)\diff \vol_n(x)=
2^{n/2}\int_\R|f(x)|^2\diff\vol_n(x)\ge0.
%\label{eq-c0''}
\end{equation*}
This gives us the first assertion as well as the second. 
The proof is complete.
\end{proof}

%\begin{bibsection} 
%\begin{biblist} 

% \end{biblist} 
% \end{bibsection} 

\end{document}